\documentclass[11pt, a4paper]{article}
\pagestyle{myheadings}\markboth{} {}

\oddsidemargin .5cm \evensidemargin .5cm \textwidth=15cm
\textheight=21truecm \unitlength=1cm
\parskip 3mm
\baselineskip 15pt

\usepackage{amsmath,amssymb,latexsym,graphics,epsfig}
\usepackage{hyperref}
\usepackage{color}
\usepackage{amsthm}
\usepackage{graphicx,url}
\usepackage{amsopn}

\newtheorem{theo}{Theorem}

\newtheorem{corollari}[theo]{Corollary}
\newtheorem{propo}[theo]{Proposition}

\newtheorem{example}[theo]{Example}

\parindent 0cm
\parskip 2mm

\DeclareMathOperator{\dgr}{dgr}
\DeclareMathOperator{\tr}{tr}

\DeclareMathOperator{\spec}{sp}
\newcommand{\qqed}{\hfill$\Box$\medskip}


\def\vec0{\mbox{\boldmath $0$}}


\def\G{\Gamma}
\def\Re{\mathbb R}

\begin{document}
\title{The spectral excess theorem for distance-regular graphs having distance-$d$ graph with fewer distinct eigenvalues
}

\author{M.A. Fiol
\\ \\
{\small $^b$Universitat Polit\`ecnica de Catalunya, BarcelonaTech} \\
{\small Dept. de Matem\`atica Aplicada IV, Barcelona, Catalonia}\\
{\small (e-mail: {\tt
fiol@ma4.upc.edu})} \\
 }
\date{}
\maketitle

\begin{abstract}
Let $\G$ be a distance-regular graph with diameter $d$ and Kneser graph $K=\G_d$, the distance-$d$ graph of $\G$.
We say that $\G$ is partially antipodal when $K$ has fewer distinct eigenvalues than $\G$.
In particular, this is the case of antipodal distance-regular graphs ($K$ with only two distinct eigenvalues), and
the so-called half-antipodal distance-regular graphs ($K$ with only one negative eigenvalue). We provide a characterization of partially antipodal distance-regular graphs (among regular graphs with $d$ distinct eigenvalues) in terms of the spectrum and the mean number of vertices at maximal distance $d$ from every vertex. This can be seen as a general version of the so-called spectral excess theorem, which allows us to characterize those distance-regular graphs which are half-antipodal, antipodal, bipartite, or with Kneser graph being strongly regular.
\end{abstract}

\noindent{\em Keywords:} Distance-regular graph; Kneser graph; Partial antipodality; Spectrum; Predistance polynomials.

\noindent{\em AMS subject classifications:} 05C50, 05E30.

\section{Preliminaries}
Let $\G$ be a distance-regular graph with  adjacency matrix $A$ and $d+1$ distinct eigenvalues. In the recent work of Brouwer and the author \cite{bf14}, we studied the situation where the distance-$d$ graph $\G_d$ of $\G$, or Kneser graph $K$, with adjacency matrix $A_d=p_d(A)$, has fewer distinct eigenvalues. In this case we say that $\G$ is {\em partially antipodal}. Examples are the so-called half antipodal ($K$ with only one negative eigenvalue, up to multiplicity), and antipodal distance-regular graphs ($K$ being disjoint copies of a complete graph). Here we generalize such a study to the case when $\G$ is a regular graph with $d+1$ distinct eigenvalues.
The main result of this paper is a characterization of partially antipodal distance-regular graphs, among regular graphs with $d+1$ distinct eigenvalues, in terms of the spectrum and the mean number of vertices at maximal distance $d$ from every vertex. This can be seen as a general version of the so-called spectral excess theorem, and allows us to characterize those distance-regular graphs which are half antipodal, antipodal, bipartite, or with Kneser graph being strongly regular. Other related characterizations of some of these cases were given by the author in \cite{f97,f00,f01}.
For background on distance-regular graphs and strongly regular graphs, we refer the reader to Brouwer, Cohen, and Neumaier \cite{bcn89}, Brouwer and Haemers \cite{bh12}, and Van Damm, Koolen and Tanaka \cite{dkt12}.

Let $\G$ be a regular (connected) graph with degree $k$, $n$ vertices, and spectrum $\spec \G=\{\lambda_0^{m_0},\lambda_1^{m_1},\ldots,\lambda_d^{m_d}\}$, where $\lambda_0(=k)>\lambda_1>\cdots>\lambda_d$, and $m_0=1$.
In this work, we use the following scalar product on the $(d+1)$-dimensional vector space of real polynomials modulo $m(x)=\prod_{i=0}^d (x-\lambda_i)$, that is, the minimal polynomial of $A$.
\begin{equation}
\label{prod}
\langle p,q\rangle_{\G}=\frac{1}{n}\tr (p(A)q(A))= \frac{1}{n}\sum_{i=0}^d m_i p(\lambda_i)q(\lambda_i), \qquad p,q\in \Re_{d}[x]/(m(x)).
\end{equation}
This is a special case of the inner product of symmetric $n\times n$ real matrices $M,N$, defined by $\langle M,N\rangle=\frac{1}{n}\tr(MN)$.
The {\it predistance polynomials} $p_0,p_1,\ldots, p_d$, introduced by the author and Garriga \cite{fg97}, are a sequence of orthogonal polynomials with respect to the  inner product \eqref{prod}, normalized in such a way that $\|p_i\|^2_{\G}=p_i(k)$.
Then, it is known that $\G$ is distance-regular if and only if such polynomials satisfy $p_i(A)=A_i$ (the adjacency matrix of the distance-$i$ graph $\G_i$) for $i=0,\ldots,d$, in which case they turn out to be the distance polynomials.
In fact, we have the following strongest proposition, which is a combination of results in \cite{fgy1b,ddfgg11}.
\begin{propo}
\label{propo-p(A)=Ad}
A regular graph $\G$ as above is distance-regular if and only if there exists a polynomial $p$ of degree $d$ such that $p(A)=A_d$, in which case $p=p_d$.\qqed
\end{propo}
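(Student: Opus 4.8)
The plan is to prove the two implications separately, with essentially all the work in the converse.

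If $\G$ is distance-regular then, by the characterization recalled just above, the predistance polynomial $p=p_d$ has degree $d$ and satisfies $p(A)=A_d$, which also settles the last assertion in this case. Conversely, suppose $p(A)=A_d$ for some $p$ with $\deg p=d$. First I would note that $A_d\neq 0$, since otherwise $p(A)=0$ would force the minimal polynomial $m(x)=\prod_{i=0}^{d}(x-\lambda_i)$, of degree $d+1$, to divide the nonzero polynomial $p$ of degree $d$, which is impossible. Hence some pair of vertices lies at distance $d$, and since $\G$ is connected with $d+1$ distinct eigenvalues its diameter is exactly $d$. Moreover, applying $p(A)=A_d$ to the all-ones vector $\j$ and using $A\j=k\j$ gives $A_d\j=p(k)\j$, so every vertex is at distance $d$ from exactly $p(k)$ others; in particular the mean number $\overline{k_d}$ of vertices at distance $d$ from a vertex equals $p(k)$, and also $\overline{k_d}=\frac1n\tr(A_d^2)=\langle p,p\rangle_{\G}$.

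The next step is to identify $p$ with $p_d$. The key point is that $A_d$ is orthogonal, in the matrix inner product $\langle M,N\rangle=\frac1n\tr(MN)$ of \eqref{prod}, to the subspace $\langle I,A,\dots,A^{d-1}\rangle$: indeed $\frac1n\tr(A_dA^{j})=\frac1n\sum_{u,v}(A_d)_{uv}(A^{j})_{uv}=0$ for $j<d$, because $(A^{j})_{uv}=0$ whenever $\dist(u,v)=d>j$ while $(A_d)_{uv}=0$ whenever $\dist(u,v)\neq d$. Since $\deg p_i=i$, the matrices $p_0(A),\dots,p_{d-1}(A)$ span $\langle I,A,\dots,A^{d-1}\rangle$, so writing $p=\sum_{i=0}^{d}c_ip_i$ and using the orthogonality and the normalization $\|p_i\|_{\G}^{2}=p_i(k)$ of the predistance polynomials, we get $c_ip_i(k)=\langle p,p_i\rangle_{\G}=\frac1n\tr(A_d\,p_i(A))=0$ for every $i<d$; as $p_i(k)>0$ this gives $c_i=0$, hence $p=c_dp_d$ with $c_d\neq 0$. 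Finally, $\overline{k_d}=\langle p,p\rangle_{\G}=c_d^{2}p_d(k)$ together with $\overline{k_d}=p(k)=c_dp_d(k)$ forces $c_d=1$. Therefore $p=p_d$, and in particular $A_d=p_d(A)$ and $\overline{k_d}=p_d(k)$.

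It then remains to deduce that $\G$ is distance-regular from $A_d=p_d(A)$ (equivalently, from $\overline{k_d}=p_d(k)$). This is precisely the strong form of the spectral excess theorem, which I would invoke from \cite{fgy1b,ddfgg11}: one shows by induction on $i$ that $A_i=p_i(A)$ for all $i$, the delicate part being the control of the intermediate distance matrices, carried out via the three-term recurrence satisfied by the predistance polynomials together with a local Cauchy--Schwarz estimate for the projections of the standard basis vectors onto the eigenspaces of $A$. I expect this final step to be the main obstacle: everything before it is elementary bookkeeping, whereas the middle distance matrices genuinely require the spectral-excess machinery. (No such machinery is needed when $d\le 3$: the only possibly missing matrix is then $A_2=J-I-A-A_d$, which is again a polynomial in $A$ because, $\G$ being connected and regular, so is $J$; and the argument of the previous paragraph identifies it with $p_2(A)$.)
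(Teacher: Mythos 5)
Your proposal is correct, and it is compatible with what the paper actually does: the paper gives no proof of this proposition at all, stating only that it is ``a combination of results in \cite{fgy1b,ddfgg11}'' and closing with \qqed. What you add is the glue that the paper leaves implicit, namely the reduction from an arbitrary degree-$d$ polynomial $p$ with $p(A)=A_d$ to $p=p_d$. Your argument for that reduction is sound: $A_d\neq 0$ forces diameter exactly $d$; $A_d$ is orthogonal to $I,A,\dots,A^{d-1}$ in the trace inner product because $(A^j)_{uv}=0$ when $\dist(u,v)>j$; expanding $p=\sum_i c_ip_i$ then kills $c_0,\dots,c_{d-1}$, and the normalization $\|p_i\|_{\G}^2=p_i(k)>0$ together with $\overline{k_d}=p(k)=\langle p,p\rangle_{\G}$ pins down $c_d=1$. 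The remaining implication --- $A_d=p_d(A)$ forces distance-regularity --- is genuinely the hard part, and you correctly defer it to \cite{fgy1b,ddfgg11} rather than proving it; this is exactly the content the paper is importing. One caution worth making explicit: you must take that final step from the external references and not from Theorem \ref{SPEtheorem} of this paper, since the equality case of Theorem \ref{SPEtheorem} is itself deduced from Proposition \ref{propo-p(A)=Ad}; invoking the ``spectral excess theorem'' loosely here would be circular. Your phrasing (citing the external sources and sketching their inductive three-term-recurrence argument) avoids this, and your parenthetical remark that for $d\le 3$ the missing matrix $A_2=J-I-A-A_d=p_2(A)$ comes for free is a correct and pleasant observation.
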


Moreover, the Hoffman polynomial $H$, such that $H(\lambda_i)=n\delta_{0i}$ and $H(A)=J$, turns out to be $H=p_0+p_1+\cdots+p_d$. Also, as in the case of distance-regular graphs, the multiplicities of $\G$ can be obtained from the values of $p_d$ since,
\begin{equation}
\label{mult.vs.pd}
(-1)^i p_d(\lambda_i)\pi_i m_i= p_d(\lambda_0)\pi_0,\qquad i=1,\ldots,d.
\end{equation}
where $\pi_i=\prod_{j\neq i}|\lambda_i-\lambda_j|$.
Indeed, let $L_i(x)=\prod_{j\neq 0,i}(x-\lambda_j)/\prod_{j\neq 0,i}(\lambda_i-\lambda_j)$. Then, since $\dgr L_i=d-1$, \eqref{mult.vs.pd} follows from $\langle L_i,p_d\rangle_{\G}=0$ for $i=1,\ldots,d$.
Some interesting consequences of the above, together with other properties of the predistance polynomials are the following (for more details, see \cite{cffg09}):
 \begin{itemize}
\item
The values of $p_d$ at $\lambda_0,\lambda_1,\ldots,\lambda_d$ alternate in sign.
\item
Using the values of $p_d(\lambda_i)$, $i=0,\ldots,d$, given by  \eqref{mult.vs.pd}, in the equality $\|p_d\|_{\G}^2=p_d(\lambda_0)$, and solving for $p_d(\lambda_0)$ we get the so-called {\em spectral excess}
\begin{equation}
\label{spexcess}
p_d(\lambda_0)=n\left(\frac{\pi_0^2}{m_ip_i^2}\right)^{-1}.
\end{equation}
\item
For every $i=0,\ldots,d$, (any multiple of) the sum polynomial $q_i=p_0+\cdots+p_i$ maximizes the quotient $r(\lambda_0)/\|r\|_{\G}$ among the polynomials $r\in \Re_{i}[x]$ (notice that $q_i(\lambda_0)^2/\|q_i\|_{\G}^2=q_i(\lambda_0$)), and
$(1=)q_0(\lambda_0)<q_1(\lambda_0)<\cdots <q_d(\lambda_0)(=H(\lambda_0)=n)$.
\end{itemize}

Let $\G$ have $n$ vertices, $d+1$ distinct eigenvalues, and diameter $D(\le d)$. For $i=0,\ldots,D$, let $k_i(u)$ be the number of vertices at distance $i$ from vertex $u$. Let $s_i(u)=k_0(u)+\cdots+k_i(u)$. Of course, $s_0(u)=1$ and $s_D(u)=n$.
The following result can be seen as a version of the spectral excess theorem, due to Garriga and the author \cite{fg97} (for short proofs, see Van Dam \cite{vd08}, and Fiol, Gago and Garriga \cite{fgg10}):
\begin{theo}
\label{SPEtheorem}
Let $\G$ be a regular graph with spectrum $\spec \G=\{\lambda_0,\lambda_1^{m_1},\ldots,\lambda_d^{m_d}\}$, where $\lambda_0>\lambda_1>\cdots>\lambda_d$. Let $\overline{s_i}=\frac{1}{n}\sum_{u\in V} s_i(u)$ be the average number of vertices at
distance at most $i$ from every vertex in $\G$. Then, for any polynomial $r\in \Re_{d-1}[x]$ we have
\begin{equation}
\label{basic-ineq}
\frac{r(\lambda_0)^2}{\|r\|_{\G}^2}\le \overline{s_{d-1}},
\end{equation}
with equality if and only if $\G$ is distance-regular and $r$ is a nonzero multiple of $q_{d-1}$.
\end{theo}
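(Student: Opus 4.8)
The plan is to reduce the inequality to the single polynomial $r=q_{d-1}=p_0+\cdots+p_{d-1}$ and then apply the Cauchy--Schwarz inequality in the matrix inner product $\langle M,N\rangle=\frac1n\tr(MN)$. Recall from the preliminaries that (any multiple of) $q_{d-1}$ maximizes $r(\lambda_0)^2/\|r\|_{\G}^2$ over all nonzero $r\in\Re_{d-1}[x]$, with maximum value $q_{d-1}(\lambda_0)$; moreover this maximizer is unique up to a nonzero scalar, as one sees by expanding $r=\sum_{i=0}^{d-1}c_ip_i$ and applying the equality case of Cauchy--Schwarz to $r(\lambda_0)=\sum_i c_i\|p_i\|_{\G}^2$ and $\|r\|_{\G}^2=\sum_i c_i^2\|p_i\|_{\G}^2$ (with weights $\|p_i\|_{\G}^2$). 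Hence it suffices to prove $q_{d-1}(\lambda_0)\le\overline{s_{d-1}}$, and then any equality in \eqref{basic-ineq} automatically forces $r$ to be a nonzero multiple of $q_{d-1}$.

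Introduce the $0/1$ matrix $B=\sum_{i=0}^{d-1}A_i$, so that $B_{uv}=1$ precisely when $\dist(u,v)\le d-1$. Then $\langle B,B\rangle=\frac1n\tr(B^2)=\frac1n\sum_{u,v}B_{uv}=\frac1n\sum_{u}s_{d-1}(u)=\overline{s_{d-1}}$. On the other hand, since $\deg q_{d-1}=d-1$, the entry $(q_{d-1}(A))_{uv}$ vanishes whenever $\dist(u,v)\ge d$, so $q_{d-1}(A)$ is orthogonal to $J-B$, which is supported on pairs at distance $\ge d$; therefore $\langle q_{d-1}(A),B\rangle=\langle q_{d-1}(A),J\rangle=q_{d-1}(\lambda_0)$, the last step because $J=nE_0$ and $q_{d-1}(A)E_0=q_{d-1}(\lambda_0)E_0$. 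Also $\langle q_{d-1}(A),q_{d-1}(A)\rangle=\|q_{d-1}\|_{\G}^2=q_{d-1}(\lambda_0)$. Now Cauchy--Schwarz, $\langle q_{d-1}(A),B\rangle^2\le\langle q_{d-1}(A),q_{d-1}(A)\rangle\,\langle B,B\rangle$, reads $q_{d-1}(\lambda_0)^2\le q_{d-1}(\lambda_0)\,\overline{s_{d-1}}$; dividing by $q_{d-1}(\lambda_0)=\|q_{d-1}\|_{\G}^2>0$ gives \eqref{basic-ineq}.

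It remains to settle the equality case, which I expect to be the delicate part. Equality in \eqref{basic-ineq} forces equality in the Cauchy--Schwarz step above, hence $B=\alpha\,q_{d-1}(A)$ for some $\alpha\neq0$ (neither matrix is zero). If the diameter $D$ satisfied $D<d$ we would have $B=J$, of rank $1$, forcing $q_{d-1}(\lambda_i)=0$ for $i=1,\dots,d$, impossible for a nonzero polynomial of degree $d-1$; so $D=d$ and $B=J-A_d$ with $A_d\neq0$. Using the Hoffman polynomial $H=p_0+\cdots+p_d$, which has degree exactly $d$ and satisfies $H(A)=J$, we obtain $A_d=(H-\alpha\,q_{d-1})(A)$, a polynomial of degree $d$ evaluated at $A$; by Proposition \ref{propo-p(A)=Ad}, $\G$ is distance-regular, and then $H-\alpha\,q_{d-1}=p_d=H-q_{d-1}$ pins down $\alpha=1$. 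Conversely, if $\G$ is distance-regular then $p_i(A)=A_i$ for all $i$, so $q_{d-1}(A)=B$ and hence $q_{d-1}(\lambda_0)=\langle q_{d-1}(A),B\rangle=\langle B,B\rangle=\overline{s_{d-1}}$, so every nonzero multiple $r=c\,q_{d-1}$ achieves equality in \eqref{basic-ineq}. The main obstacle is exactly the implication ``$B$ proportional to $q_{d-1}(A)$ $\Rightarrow$ $\G$ distance-regular'': it goes through only because $H$ has degree exactly $d$, so that $H-\alpha q_{d-1}$ still has degree $d$ and Proposition \ref{propo-p(A)=Ad} applies, and because the degenerate small-diameter case $D<d$ must be ruled out first.
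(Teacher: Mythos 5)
Your proof is correct and follows essentially the same route as the paper: the heart in both cases is the Cauchy--Schwarz inequality in the trace inner product applied against $S_{d-1}=\sum_{i=0}^{d-1}A_i$ (using that $r(A)$ vanishes on pairs at distance $\ge d$, so $\langle r(A),S_{d-1}\rangle=\langle r(A),J\rangle=r(\lambda_0)$), followed by Proposition \ref{propo-p(A)=Ad} to get distance-regularity in the equality case. Your only deviation is a preliminary reduction to the extremal polynomial $q_{d-1}$ via a second (polynomial-level) Cauchy--Schwarz, which also lets you pin down $\alpha=1$ and rule out $D<d$ a bit more explicitly than the paper does, but this is a reorganization rather than a different argument.
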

\begin{proof}
Let $S_{d-1}=I+A+\cdots +A_{d-1}$.
As $\dgr r\le d-1$, $\langle r(A),J\rangle = \langle r(A), S_{d-1}\rangle$.
But $\langle r(A),J\rangle=\langle r,H\rangle_{\G}=r(\lambda_0)$. Thus, Cauchy-Schwarz inequality gives
$$
r^2(\lambda_0)\le \|r(A)\|^2\|S_{d-1}\|^2=\|r\|^2_{\G}\overline{s_{d-1}},
$$
whence \eqref{basic-ineq} follows.
Besides, in case of equality we have that $r(A)=\alpha S_{d-1}$ for some nonzero constant $\alpha$. Hence, the polynomial $p=H-(1/\alpha)r$ satisfies $p(A)=J-S_{d-1}=A_d$ and, from Proposition \ref{propo-p(A)=Ad},
$\G$ is distance-regular, $p=p_d$, and $r=\alpha q_{d-1}$. The converse in clear from $s_{d-1}=n-k_d=H(\lambda_0)-p_d(\lambda_0)=q_{d-1}(\lambda_0)$.
\end{proof}

In fact, as it was shown in \cite{f02}, the above result still holds if we change the arithmetic mean of the numbers $s_{d-1}(u)$, $u\in V$, by its harmonic mean.

\section{The results}
As commented above,
in  \cite{bf14} we studied
the situation where the distance-$d$ graph $\G_d$, of a distance-regular graph $\G$ with diameter $d$, has fewer distinct eigenvalues.
Now, we are interested in the case when $\G$ is regular and with $d+1$ distinct eigenvalues.
In this context, $p_d$ is the highest degree predistance polynomial and,
as $p_d(A)$ is not necessarily the distance-$d$ matrix $A_d$
(usually not even a $0$-$1$ matrix), we consider the distinct eigenvalues of $p_d(A)$ vs.
those of $A$. More precisely, given a set $H\subset \{0,\ldots,d\}$, we give conditions
for all $p_d(\lambda_i)$ with $i\in H$ taking the same value. Notice that, because the values
of $p_d$ at the mesh $\lambda_0,\lambda_1,\ldots,\lambda_d$ alternate in sign, the feasible sets $H$ must have either even or odd numbers

\subsection*{The case \boldmath $\lambda_0\notin H$}
We first study the more common case when $\lambda_0\notin H$.
For $i=1,\ldots,d$, let $\phi_i(x)=\prod_{j\neq 0,i}(x-\lambda_i)$, and consider again the Lagrange interpolating polynomial $L_i(x)=\phi_i(x)/\phi(\lambda_i)$, satisfying $L_i(\lambda_j)=\delta_{ij}$ for $j\neq 0$,
and $L_i(\lambda_0)=(-1)^{i+1}\frac{\pi_0}{\pi_i}$, where $\pi_i=|\phi_i(\lambda_i)|$.

\begin{theo}
\label{teo(basic)}
Let $\G$ be a regular graph with degree $k$, $n$ vertices, and spectrum $\spec \G=\{\lambda_0,\lambda_1^{m_1},\ldots,\lambda_d^{m_d}\}$, where $\lambda_0(=k)>\lambda_1>\cdots>\lambda_d$. Let $H\subset \{1,\ldots,d\}$.
For every $i=0,\ldots,d$, let $\pi_i=\prod_{j\neq i}|\lambda_i-\lambda_j|$. Let $\overline{k_d}=\frac{1}{n}\sum_{u\in V} k_d(u)$ be the average number of vertices at
distance $d$ from every vertex in $\G$. Then,
\begin{equation}
\label{ineq-basic-theo}
\overline{k_d}\le \frac{n\sum_{i\in H}m_i}
 {\left(\sum_{i\in H}\frac{\pi_0}{\pi_i}\right)^2+\sum_{i\notin H}\frac{\pi_0^2}{m_i\pi_i^2}\sum_{i\in H}m_i},
\end{equation}
and equality holds if and only if $\G$ is a distance-regular graph with constant $P_{id}=p_d(\lambda_i)$ for every $i\in H$.
\end{theo}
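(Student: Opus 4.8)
The plan is to mimic the proof of Theorem~\ref{SPEtheorem}, but instead of comparing $r(A)$ with the single matrix $S_{d-1}$, I would compare a well-chosen polynomial in $A$ against the matrix $A_d$ (equivalently $J-S_{d-1}$) using the Cauchy--Schwarz inequality, and then optimise over the relevant family of polynomials so that the bound produced is exactly the right-hand side of \eqref{ineq-basic-theo}. First I would set up the orthogonal decomposition of $\Re_d[x]/(m(x))$ with respect to $\langle\cdot,\cdot\rangle_\G$ into the span of the predistance polynomials $p_0,\dots,p_d$, and record that $\langle p(A),A_d\rangle$ depends only on the ``top'' component of $p$ along $p_d$, since $\langle p_i(A),A_d\rangle=0$ for $i<d$ when $\G$ is distance-regular — but more usefully I would express $\langle p(A),A_d\rangle$ and $\|A_d\|^2=\overline{k_d}$ directly via traces, writing $\langle p(A),A_d\rangle=\frac1n\sum_j m_j p(\lambda_j)P_{dd}(\lambda_j)$-type identities that hold for general regular $\G$ once one knows $A_d$ is a polynomial, and fall back to the Hoffman relation $H(A)=J$ otherwise.

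The key technical step is the correct choice of test polynomial. Given the index set $H\subset\{1,\dots,d\}$, I would consider polynomials of the form $r=\sum_{i\in H}\alpha_i L_i$ (degree $\le d-1$, since each $L_i$ has degree $d-1$), so that $r(\lambda_j)=\alpha_j$ for $j\in H$ and $r(\lambda_j)=0$ for $j\in\{1,\dots,d\}\setminus H$, while $r(\lambda_0)=\sum_{i\in H}\alpha_i(-1)^{i+1}\pi_0/\pi_i$. The crucial observation is that testing such $r$ against $A_d$ via Cauchy--Schwarz, $\langle r(A),A_d\rangle^2\le \|r\|_\G^2\,\overline{k_d}$, will not by itself give the stated bound, because $r(A)$ is not orthogonal to $S_{d-1}$; instead I would apply Cauchy--Schwarz to $\langle r(A), J\rangle$ or, better, work with the polynomial $H-r$ and the matrix $A_d$, exploiting $\langle H(A),A_d\rangle=\langle J,A_d\rangle=\overline{k_d}\cdot\frac{?}{}$... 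The cleanest route: since $A_d=H(A)-S_{d-1}(A)$ only when $\G$ is DR, in general I would instead estimate $\overline{k_d}=\|A_d\|^2\ge \langle A_d, M\rangle^2/\|M\|^2$ for a cleverly chosen symmetric matrix $M$ lying in the algebra generated by $A$; taking $M=c_0 J + r(A)$ with $r$ as above and $c_0$ a free scalar, then optimising over $c_0$ and the $\alpha_i$ (a finite-dimensional quadratic optimisation that, by the Cauchy--Schwarz equality condition, is solved when $M$ is a scalar multiple of $A_d$), yields exactly the denominator appearing in \eqref{ineq-basic-theo}: the term $\big(\sum_{i\in H}\pi_0/\pi_i\big)^2$ comes from the $r(\lambda_0)$ contribution and the term $\sum_{i\notin H}\frac{\pi_0^2}{m_i\pi_i^2}\sum_{i\in H}m_i$ from the spectral-excess expression \eqref{spexcess} restricted to the complementary eigenvalues.

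For the equality analysis I would argue exactly as in Theorem~\ref{SPEtheorem}: equality in Cauchy--Schwarz forces $A_d$ (or $J-S_{d-1}$) to equal a fixed polynomial $p(A)$ of degree $d$, so by Proposition~\ref{propo-p(A)=Ad} the graph $\G$ is distance-regular with $p=p_d$; then tracking which polynomial achieved equality shows that $p_d(\lambda_i)$ must be constant on $H$, i.e. $P_{id}=p_d(\lambda_i)$ is independent of $i\in H$. Conversely, if $\G$ is distance-regular with $p_d$ constant on $H$, one substitutes the explicit values $p_d(\lambda_i)$ from \eqref{mult.vs.pd} into the identities $\sum_i m_i p_d(\lambda_i)=0$ (from $\langle p_d, 1\rangle_\G=0$) and $\|p_d\|_\G^2=p_d(\lambda_0)=\overline{k_d}$, and verifies by direct computation that the right-hand side of \eqref{ineq-basic-theo} collapses to $\overline{k_d}$. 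The main obstacle I anticipate is bookkeeping: getting the signs right in $L_i(\lambda_0)=(-1)^{i+1}\pi_0/\pi_i$ so that the cross terms in $\|M\|_\G^2$ either cancel or combine cleanly, and confirming that the constrained optimum over $(c_0,\{\alpha_i\})$ genuinely reproduces the claimed denominator rather than something merely proportional to it; I expect this to reduce, after clearing denominators, to recognising $\sum_{i\in H}m_i$ as a common factor.
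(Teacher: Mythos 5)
Your overall strategy (Cauchy--Schwarz against a matrix built from $J$ and a degree-$(d-1)$ polynomial in $A$, optimise over a finite-dimensional family, then invoke Proposition~\ref{propo-p(A)=Ad} for the equality case) is the right one and is essentially what the paper does through Theorem~\ref{SPEtheorem}. But your choice of test family is wrong, and this is not mere bookkeeping: it changes both the value of the bound and the equality condition. You take $r=\sum_{i\in H}\alpha_i L_i$, so $r(\lambda_j)=0$ for $j\in\{1,\dots,d\}\setminus H$, with all $\alpha_i$ free. Optimising $r(\lambda_0)^2/\|r\|_\G^2$ over this span gives, by Cauchy--Schwarz on the coefficients, the bound $\overline{k_d}\le n\bigl(1+\sum_{i\in H}\tfrac{\pi_0^2}{m_i\pi_i^2}\bigr)^{-1}$, attained at $\alpha_i\propto(-1)^{i+1}\pi_0/(m_i\pi_i)$, i.e.\ at $\alpha_i$ proportional to $p_d(\lambda_i)$ --- not at constant $\alpha_i$. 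This is a valid but strictly weaker inequality than \eqref{ineq-basic-theo}: already for $H=\{j\}$ it gives $n/\bigl(1+\pi_0^2/(m_j\pi_j^2)\bigr)$ instead of the spectral excess $p_d(\lambda_0)$, because the denominator is missing every term $\pi_0^2/(m_i\pi_i^2)$ with $i\notin H$. Worse, equality in your version would force the extremal $r$ to be a multiple of $q_{d-1}$, hence $q_{d-1}(\lambda_j)=-p_d(\lambda_j)=0$ for $j\notin H$, which is impossible since the values of $p_d$ at $\lambda_0,\dots,\lambda_d$ alternate in sign; so with your family equality never occurs and the ``$P_{id}$ constant on $H$'' characterisation cannot be extracted.

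The fix, and what the paper actually does, is to pin the test polynomial to the values of $q_{d-1}$ \emph{outside} $H$ and free only the common value \emph{on} $H$: take $r\in\Re_{d-1}[x]$ with $r(\lambda_i)=-t$ for $i\in H$ and $r(\lambda_i)=-p_d(\lambda_i)$ for $i\notin H$, $i\neq 0$, where $t$ is a single real parameter. Then $r(\lambda_0)$ is affine and $\|r\|_\G^2$ quadratic in $t$, the quotient $\Phi(t)=r(\lambda_0)^2/\|r\|_\G^2$ has an explicit maximiser $t_0$, and $n-\Phi(t_0)$ is exactly the right-hand side of \eqref{ineq-basic-theo}; equality forces $r=q_{d-1}$ (the scalar is fixed by comparing values at some $\lambda_i$ with $i\notin H$, where $p_d(\lambda_i)\neq 0$), whence $p_d(\lambda_i)=t_0$ for all $i\in H$. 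Your converse direction (substituting \eqref{mult.vs.pd} into $\|p_d\|_\G^2=p_d(\lambda_0)=k_d$) matches the paper and is fine. A secondary point: your inequality $\overline{k_d}=\|A_d\|^2\ge\langle A_d,M\rangle^2/\|M\|^2$ only converts into an \emph{upper} bound on $\overline{k_d}$ because $\langle A_d,J\rangle=\overline{k_d}$ while $\langle A_d,r(A)\rangle=0$ for $\dgr r\le d-1$ (since $r(A)_{uv}=0$ whenever $\dist(u,v)>\dgr r$); you should state this explicitly, as it is the step replacing the identity $\langle r(A),J\rangle=\langle r(A),S_{d-1}\rangle$ used in Theorem~\ref{SPEtheorem}.
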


\begin{proof}
The clue is to apply Theorem \ref{SPEtheorem} with a polynomial $r\in \Re_{d-1}[x]$
having the desired properties of $q_{d-1}$. To this end, first notice  that, as $q_{d-1}=H-p_d$,
we have $q_{d-1}(\lambda_i)=-p_d(\lambda_i)$ for any $i\neq 0$.
Thus, we take the polynomial $r$ with values
$r(\lambda_i)=-t$ for $i\in H$, and
$r(\lambda_i)=-p_d(\lambda_i)$ for $i\notin H$, $i\neq 0$.
Then, using \eqref{mult.vs.pd},
\begin{align*}
r(x)&=-t\sum_{i\in H}L_i(x)-\sum_{i\notin H,i\neq 0}p_d(\lambda_i)L_i(x),\\
r(\lambda_0)&=-t\sum_{i\in H}(-1)^{i+1}\frac{\pi_0}{\pi_i}-\sum_{i\notin H,i\neq 0}p_d(\lambda_i)(-1)^{i+1}\frac{\pi_0}{\pi_i}\\
&=-t\sum_{i\in H}(-1)^{i+1}\frac{\pi_0}{\pi_i}-p_d(\lambda_0)^2\sum_{i\notin H,i\neq 0}\frac{\pi_0^2}{m_i\pi_i^2},\\
n\|r\|^2_{\G}
&=r(\lambda_0)^2+t^2\sum_{i\in H}m_i+\sum_{i\notin H, i\neq 0}m_i p_d(\lambda_i)^2.
\end{align*}
Thus, \eqref{basic-ineq} yields
\begin{equation}
\label{Phi(t)}
\Phi(t)=\frac{r(\lambda_0)^2}{\|r\|_{\G}^2}=\frac{n(\alpha t+\beta)^2}{(\alpha t+\beta)^2+\sigma t^2+\gamma}\le \overline{s_{d-1}}
\end{equation}
where
\begin{align}
\alpha & = \sum_{i\in H}(-1)^{i+1}\frac{\pi_0}{\pi_i},\qquad\qquad\qquad
\beta =-p_d(\lambda_0)\sum_{i\notin H,i\neq 0}\frac{\pi_0^2}{m_i\pi_i^2},
\label{alpha-beta}\\
\gamma & =\sum_{i\notin H,i\neq 0}m_ip_d(\lambda_i)^2=\sum_{i\notin H,i\neq 0}\frac{p_d(\lambda_0)^2}{m_i}\frac{\pi_0^2}{\pi_i^2}=-p_d(\lambda_0)\beta, \qquad\sigma=\sum_{i\in H}m_i.
\label{gamma-sigma}
\end{align}

Now, to have the best result in \eqref{Phi(t)} (and since we are mostly interested in the case of equality), we have to find the maximum of the function $\Phi(t)$, which is attained at $t_0=\alpha\gamma/\beta\sigma$. Then,
$$
\Phi_{\max}=\Phi(t_0)=\frac{n(\alpha^2\gamma+\beta^2\sigma)}{\alpha^2\gamma+\beta^2\sigma+\gamma\sigma}\le \overline{s_{d-1}}=n-\overline{k_d}.
$$
Thus, using \eqref{alpha-beta}-\eqref{gamma-sigma} and simplifying we get \eqref{ineq-basic-theo}.
In case of equality, we know, by Theorem \ref{SPEtheorem}, that $\G$ is distance-regular with $r(x)=\alpha q_{d-1}(x)$ for some constant $\alpha$. If $i\notin H,i\neq 0$, $r(\lambda_i)=-p_d(\lambda_i)=\alpha q_{d-1}(\lambda_i)=-\alpha p_d(\lambda_i)$, so that $\alpha=1$ since $p_d(\lambda_i)\neq 0$. Then, for every $i\in H$, we get
$$
P_{id}=p_d(\lambda_i)=H(\lambda_i)-q_{d-1}(\lambda_i)=-r(\lambda_i)=t_0.
$$
Conversely, if $\G$ is distance-regular, we have that $\overline{k_d}=k_d$, and, if $P_{id}$ is a constant, say, $\tau$  for every $i\in H$,
we obtain, from \eqref{mult.vs.pd}, that $\sigma=\frac{k_d}{p_d(\lambda_i)}\sum_{i\in H}(-1)^i\frac{\pi_0}{\pi_i}=\frac{k_d}{\tau}\alpha$, whence $\tau=\frac{1}{\sigma}k_d\alpha$.
Moreover,
$$
nk_d=\|p_d\|_{\G}^2=\sum_{i\notin H}m_i p_d(\lambda_i)^2+\sum_{i\in H}m_i \tau^2=
k_d^2\sum_{i\notin H}\frac{\pi_0^2}{m_i\pi_i^2}+k_d^2\frac{\left(\sum_{i\in H}\frac{\pi_0}{\pi_i}\right)^2}{\sum_{i\in H}m_i},
$$
and equality in \eqref{eq-spet-half-antipod} holds.
\end{proof}

As mentioned above, when $\G$ is already a distance-regular graph, Brouwer and the author \cite{bf14} gave parameter conditions for partial antipodality, and surveyed known examples. The different examples given here are withdrawn from such a paper.
\begin{example}
The Odd graph $O_5$, on $n=126$ vertices, has intersection array $\{5,4,4,3;1,1,2,2\}$, so that $k_d=60$, and spectrum
$5^1,3^{27},1^{42},-2^{48},-4^{8}$. Then, with $H=\{2,4\}$, the function $\Phi(t)$ is depicted in Fig. \ref{O5}.
Its  maximum is attained for $t_0=6$, and its value is $\Phi(6)=66=s_{d-1}$. Then, $P_{24}=P_{44}$.
\end{example}

\begin{figure}[ht]
  \center
  \includegraphics[scale=.4]{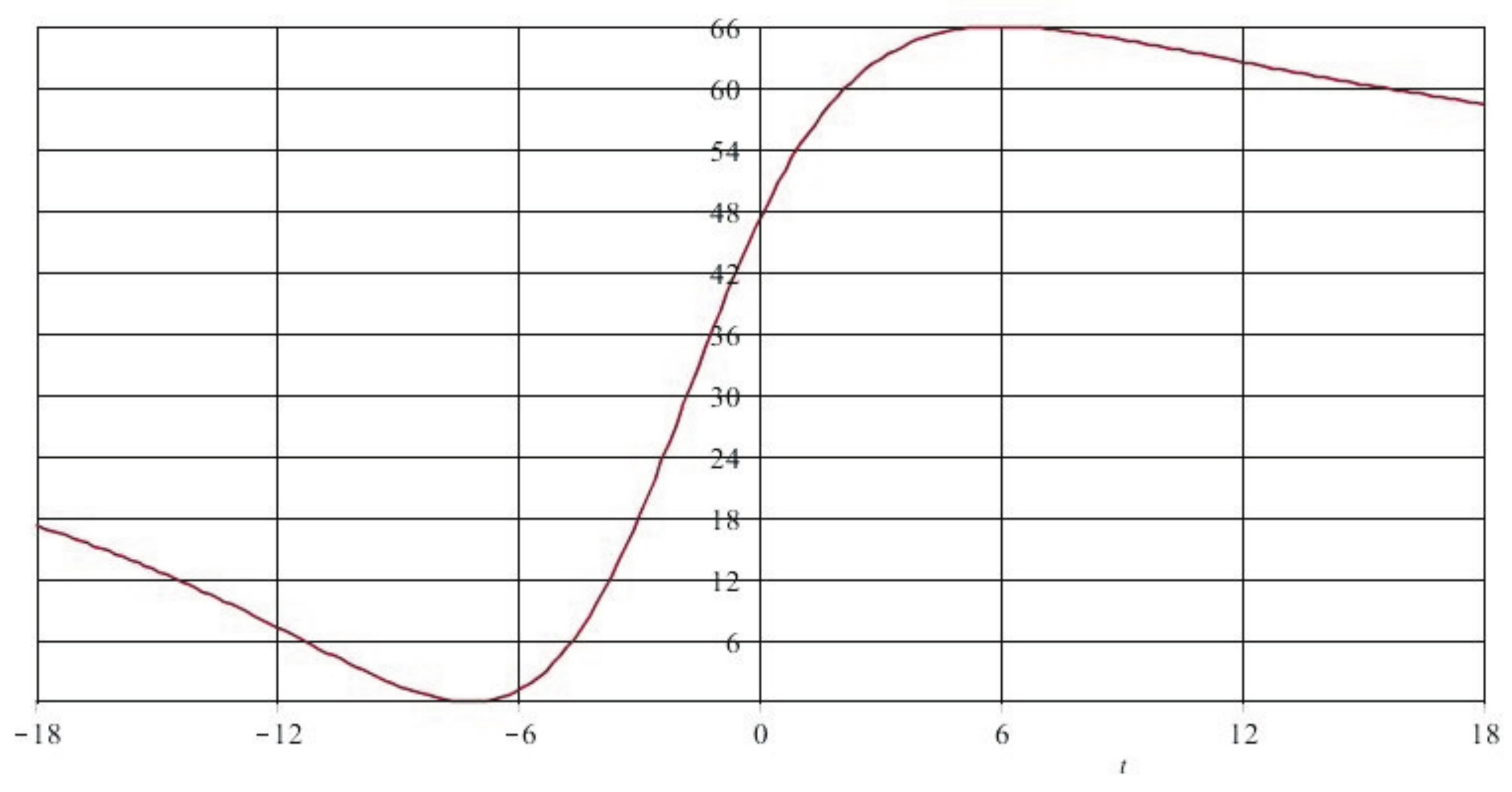}\\
  \caption{The function $\Phi(t)$ for $O_5$ with $H=\{2,4\}$.}
  \label{O5}
\end{figure}

Notice that if, in the above result, $H$ is a singleton, there is no restriction for the values of $p_d$, and then we get the so-called spectral excess theorem (originally proved by Garriga and the author \cite{fg97}).

\begin{corollari}[The spectral excess theorem]
Let $\G$ be a regular graph with spectrum $\spec \G$  and average number $\overline{k_d}$ as above.
Then $\G$ is distance-regular if and only if
$$
\overline{k_d}= p_d(\lambda_0)= n\left( \sum_{i=0}^d\frac{\pi_0^2}{m_i\pi_i^2}\right)^{-1}.
$$
\end{corollari}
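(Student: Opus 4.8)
The plan is to specialize Theorem~\ref{teo(basic)} to the case in which $H$ is a singleton, say $H=\{j\}$ for an arbitrary fixed $j\in\{1,\dots,d\}$, and to observe that then no information about the remaining eigenvalues survives. Substituting $H=\{j\}$ into the bound \eqref{ineq-basic-theo}, the numerator becomes $n m_j$, the term $\bigl(\sum_{i\in H}\pi_0/\pi_i\bigr)^2$ becomes $\pi_0^2/\pi_j^2$, and, since the index set $i\notin H$ runs over $\{0,1,\dots,d\}\setminus\{j\}$, we have
\[
\Bigl(\sum_{i\notin H}\frac{\pi_0^2}{m_i\pi_i^2}\Bigr)\sum_{i\in H}m_i
= m_j\Bigl(\sum_{i=0}^d\frac{\pi_0^2}{m_i\pi_i^2}-\frac{\pi_0^2}{m_j\pi_j^2}\Bigr),
\]
whose last summand contributes exactly $\pi_0^2/\pi_j^2$. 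Thus the two $j$-dependent pieces in the denominator cancel and it collapses to $m_j\sum_{i=0}^d\frac{\pi_0^2}{m_i\pi_i^2}$, leaving
\[
\overline{k_d}\ \le\ \frac{n m_j}{m_j\sum_{i=0}^d\frac{\pi_0^2}{m_i\pi_i^2}}\ =\ n\Bigl(\sum_{i=0}^d\frac{\pi_0^2}{m_i\pi_i^2}\Bigr)^{-1}=p_d(\lambda_0),
\]
the last equality being the spectral excess identity \eqref{spexcess}. (Keeping the $i=0$ term, which equals $1/m_0=1$, is exactly what makes this count work.)

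It remains to treat the equality case. By Theorem~\ref{teo(basic)}, equality forces $\G$ to be distance-regular with $P_{id}=p_d(\lambda_i)$ constant over $i\in H$; but with $H$ a single point this constancy condition is vacuous, and in any event the identities $P_{id}=p_d(\lambda_i)$ hold automatically once $\G$ is distance-regular, by Proposition~\ref{propo-p(A)=Ad}. Hence, for a singleton $H$, equality in the specialized bound is equivalent to $\G$ being distance-regular, which is one implication of the corollary. The converse is already contained in Theorem~\ref{teo(basic)}, but it is also immediate on its own: if $\G$ is distance-regular then $k_d(u)=k_d$ for every vertex $u$, so $\overline{k_d}=k_d$, while $A_d=p_d(A)$ has the all-ones vector as an eigenvector with eigenvalue equal to its valency $k_d$, whence $\overline{k_d}=k_d=p_d(\lambda_0)$; together with \eqref{spexcess} this yields the full chain of equalities in the statement.

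No real obstacle is expected here: the whole point is the observation that a one-element set $H$ imposes no constraint on the values of $p_d$, together with the elementary cancellation above. The only steps requiring a moment's care are reading the range $\{0,\dots,d\}\setminus H$ of the index set $i\notin H$ correctly (so that the $i=0$ term is retained) and recognizing the simplified right-hand side as the spectral excess $p_d(\lambda_0)$ via \eqref{spexcess}.
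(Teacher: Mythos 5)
Your proposal is correct and follows exactly the paper's route: the paper's own proof is the one-line instruction ``take $H=\{i\}$ for some $i\neq 0$ in Theorem~\ref{teo(basic)}'', and you have simply carried out the resulting cancellation (correctly noting that the index $i\notin H$ retains $i=0$, contributing $\pi_0^2/(m_0\pi_0^2)=1$) and observed that the constancy condition is vacuous for a singleton $H$.
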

\begin{proof}
Take $H=\{i\}$ for some $i\neq 0$ in Theorem \ref{teo(basic)}.
\end{proof}

As mentioned before, in \cite{bf14} a distance-regular graph $\G$ was said to be  half antipodal if the distance-$d$ graph has only one negative eigenvalue (i.e., $P_{id}$ is a constant for every $i=1,3,\ldots$).
Then, a direct consequence of Theorem \ref{teo(basic)} by taking $H=H_{\mbox{\scriptsize odd}}=\{1,3,\ldots\}$ is the following characterization of half antipodality.

\begin{corollari}
\label{teo(half-antipod)}
Let $\G$ be a regular graph as above. Then,
\begin{equation}
\label{eq-spet-half-antipod}
\overline{k_d}\le \frac{n\sum_{\mbox{\scriptsize $i$ odd}}m_i}
{\left(\sum_{\mbox{\scriptsize $i$ odd}}\frac{\pi_0}{\pi_i}\right)^2+\sum_{\mbox{\scriptsize $i$ even}}\frac{\pi_0^2}{m_i\pi_i^2}\sum_{\mbox{\scriptsize $i$ odd}}m_i},
\end{equation}
and equality holds if and only if $\G$ is a half antipodal distance-regular graph. \qqed
\end{corollari}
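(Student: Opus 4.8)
The plan is to specialize Theorem~\ref{teo(basic)} to the choice $H=H_{\mathrm{odd}}=\{1,3,5,\dots\}\cap\{1,\dots,d\}$, the set of odd indices in $\{1,\dots,d\}$; this is an admissible set since all its elements share the same parity, so the sign factors $(-1)^{i+1}$ with $i\in H$ are all equal to $1$. With this $H$ one has $i\in H$ precisely when $i$ is odd, while $i\notin H$ (note that $0\notin H$ automatically) precisely when $i$ is even, the value $i=0$ included. Substituting into \eqref{ineq-basic-theo} gives $\sum_{i\in H}m_i=\sum_{i\text{ odd}}m_i$, $\sum_{i\in H}\pi_0/\pi_i=\sum_{i\text{ odd}}\pi_0/\pi_i$, and $\sum_{i\notin H}\pi_0^2/(m_i\pi_i^2)=\sum_{i\text{ even}}\pi_0^2/(m_i\pi_i^2)$, so that \eqref{ineq-basic-theo} turns verbatim into \eqref{eq-spet-half-antipod}.

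For the equality case I would invoke the equality clause of Theorem~\ref{teo(basic)}: equality in \eqref{eq-spet-half-antipod} holds if and only if $\G$ is distance-regular and $P_{id}=p_d(\lambda_i)$ takes a common value for all $i\in H$, that is, for all odd $i$. It then remains to recognize this as half-antipodality. The Kneser graph $K=\G_d$ has adjacency matrix $A_d=p_d(A)$, hence eigenvalues $p_d(\lambda_0),\dots,p_d(\lambda_d)$ with multiplicities $m_0,\dots,m_d$; since the values of $p_d$ on the mesh alternate in sign and $p_d(\lambda_0)>0$, the negative eigenvalues of $K$ are exactly the $p_d(\lambda_i)$ with $i$ odd. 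Thus $K$ has a single negative eigenvalue (up to multiplicity) if and only if all these coincide, which is the defining property of a half-antipodal graph; combined with distance-regularity this is exactly the equality condition delivered by Theorem~\ref{teo(basic)}, and the proof is complete.

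The argument has essentially no hard part: it is a substitution followed by unwinding the definition of half-antipodality. The only point requiring a little care is the treatment of the index $i=0$: the sum $\sum_{i\notin H}\pi_0^2/(m_i\pi_i^2)$ occurring in \eqref{ineq-basic-theo} tacitly includes the term $i=0$, whose value is $\pi_0^2/(m_0\pi_0^2)=1$ because $m_0=1$, whereas the intermediate quantities in the proof of Theorem~\ref{teo(basic)} are written with the restriction $i\neq0$. One should check that the extra summand $\sigma$ appearing in the denominator there (the contribution of the term $\gamma\sigma$, equivalently of $\beta^2\sigma/\gamma+\sigma$ after dividing through by $\gamma$) is precisely this unit contribution, so that the specialization from \eqref{ineq-basic-theo} to \eqref{eq-spet-half-antipod} is clean and no further computation is needed.
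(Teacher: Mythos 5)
Your proposal is correct and follows exactly the paper's route: the corollary is obtained by specializing Theorem~\ref{teo(basic)} to $H=H_{\mbox{\scriptsize odd}}=\{1,3,\ldots\}$ and identifying the equality condition ($P_{id}$ constant for all odd $i$, i.e.\ $\G_d$ has a single negative eigenvalue) with half-antipodality. Your extra bookkeeping on the $i=0$ term is accurate but not needed beyond what the theorem's statement already encapsulates.
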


Recall that a regular graph is strongly regular if and only if it has at most three distinct eigenvalues (see e.g. \cite{g93}). Then, we can apply Theorem \ref{teo(basic)} with $H_{\mbox{\scriptsize even}}=\{2,4,\ldots \}$ and $H_{\mbox{\scriptsize odd}}=\{1,3,\ldots\}$ (and add up the two inequalities obtained) to obtain a characterization of those distance-regular graphs having strongly regular distance-$d$ graph.

\begin{corollari}
Let $\G$ be a regular graph as above. Then,
\begin{equation}
\label{eq-spet-half-antipod}
\overline{k_d}\le \frac{n^2}{\displaystyle
\left(\sum_{\mbox{\scriptsize $i$ even}}\frac{\pi_0}{\pi_i}\right)^2
+\left(\sum_{\mbox{\scriptsize $i$ odd}}\frac{\pi_0}{\pi_i}\right)^2
+\sum_{\mbox{\scriptsize $i$ even}}\frac{\pi_0^2}{m_i\pi_i^2}\sum_{\mbox{\scriptsize $i$ odd}}m_i
+\sum_{\mbox{\scriptsize $i$ odd}}\frac{\pi_0^2}{m_i\pi_i^2}\sum_{\mbox{\scriptsize $i$ even}}m_i},
\end{equation}
and equality holds if and only if $\G$ is a distance-regular graph with strongly regular distance-$d$ graph $\G_d$. \qqed
\end{corollari}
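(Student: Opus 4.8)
The plan is to run the argument behind Corollary~\ref{teo(half-antipod)} on the two complementary index blocks at once. First I would translate the target equality condition into a statement purely about the predistance polynomial $p_d$. For distance-regular $\G$ one has $A_d=p_d(A)$, whose spectrum consists of the numbers $P_{id}=p_d(\lambda_i)$, $i=0,\ldots,d$; these alternate in sign along $\lambda_0>\lambda_1>\cdots>\lambda_d$, and $P_{0d}=k_d$ is the degree (hence, when $\G_d$ is connected, a simple eigenvalue) of $\G_d$. Consequently ``$\G_d$ strongly regular'', i.e.\ $A_d$ has at most three distinct eigenvalues, is for distance-regular $\G$ equivalent to: $P_{id}$ is constant on the odd block $H_{\mathrm{odd}}=\{1,3,\ldots\}$ and constant on the even block $H_{\mathrm{even}}=\{2,4,\ldots\}$. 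So I must realise the simultaneous occurrence of the two ``$P_{id}$ constant on $H$'' cases of Theorem~\ref{teo(basic)}.

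Then I would invoke Theorem~\ref{teo(basic)} once with $H=H_{\mathrm{odd}}$ and once with $H=H_{\mathrm{even}}$, writing each instance of \eqref{ineq-basic-theo} with cleared denominators, i.e.\ as $\overline{k_d}\bigl[(\sum_{i\in H}\pi_0/\pi_i)^2+(\sum_{i\notin H}\pi_0^2/(m_i\pi_i^2))\sum_{i\in H}m_i\bigr]\le n\sum_{i\in H}m_i$, with equality iff $\G$ is distance-regular and $P_{id}$ is constant on $H$. Adding the two inequalities, the right-hand sides combine (using that the even indices $\ge 2$, the odd indices, and the index $0$ together carry all of $n$, and absorbing the $i=0$ terms $\pi_0/\pi_0=1$, $\pi_0^2/(m_0\pi_0^2)=1$, $m_0=1$ into the sums over even indices) into the numerator $n^2$ of the statement, while the two left-hand brackets assemble into $\overline{k_d}$ times the displayed denominator, in which the mixed products $\sum_{i\,\mathrm{even}}\frac{\pi_0^2}{m_i\pi_i^2}\sum_{i\,\mathrm{odd}}m_i$ and $\sum_{i\,\mathrm{odd}}\frac{\pi_0^2}{m_i\pi_i^2}\sum_{i\,\mathrm{even}}m_i$ are exactly the cross-terms produced by this addition. (A cleaner variant of the same idea, avoiding the addition of two separate inequalities, is to feed Theorem~\ref{SPEtheorem} a single $r\in\Re_{d-1}[x]$ taking a value $-t_{\mathrm{e}}$ on $H_{\mathrm{even}}$ and $-t_{\mathrm{o}}$ on $H_{\mathrm{odd}}$, and to maximise $r(\lambda_0)^2/\|r\|_{\G}^2$ over $(t_{\mathrm{e}},t_{\mathrm{o}})$ by Cauchy-Schwarz, mimicking the one-parameter optimisation in the proof of Theorem~\ref{teo(basic)}.)

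For the equality clause, equality in the sum forces equality in both summands, so by Theorem~\ref{teo(basic)} $\G$ is distance-regular with $P_{id}$ constant on $H_{\mathrm{odd}}$ and on $H_{\mathrm{even}}$; conversely, if $\G$ is distance-regular then either of these constancy conditions already yields equality in the corresponding bound of Theorem~\ref{teo(basic)} and hence in the sum. By the reduction of the first paragraph, this is precisely the assertion that $\G_d$ is strongly regular, and the quoted fact that a regular graph is strongly regular iff it has at most three distinct eigenvalues closes the argument.

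The step I expect to cost the most care is the bookkeeping of the addition: checking that, after clearing denominators, the two quadratic brackets of \eqref{ineq-basic-theo} really add up to the denominator in the statement — getting both mixed products right and keeping correct track of where the $i=0$ contributions go (this is also the point at which one decides whether the ``even'' sums run over $\{2,4,\ldots\}$ or over $\{0,2,4,\ldots\}$) — together with making the equivalence ``equality in both bounds $\Leftrightarrow$ $\G_d$ strongly regular'' watertight, for which one uses the sign-alternation of the $P_{id}$ and the fact that $P_{0d}=k_d$ is the degree eigenvalue of $\G_d$ to see that two constancy conditions are exactly what caps the number of distinct eigenvalues of $A_d$ at three.
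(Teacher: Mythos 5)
Your plan is the same as the paper's (the paper's entire ``proof'' is the sentence preceding the corollary: apply Theorem~\ref{teo(basic)} to $H_{\mathrm{odd}}=\{1,3,\ldots\}$ and $H_{\mathrm{even}}=\{2,4,\ldots\}$ and add the two inequalities), and your reduction of ``$\G_d$ strongly regular'' to the two constancy conditions is fine. The problem is exactly the step you defer as ``bookkeeping'': it does not work out, and no choice of convention for the $i=0$ terms fixes it. Clearing denominators and adding gives
\begin{equation*}
\overline{k_d}\,(b_{\mathrm{o}}+b_{\mathrm{e}})\;\le\; n\Bigl(\textstyle\sum_{i\,\mathrm{odd}}m_i+\sum_{i\,\mathrm{even},\,i\ge 2}m_i\Bigr)=n(n-1),
\end{equation*}
where $b_{\mathrm{o}},b_{\mathrm{e}}$ are the two denominators of \eqref{ineq-basic-theo}; the numerator is $n(n-1)$, not $n^2$, and $b_{\mathrm{o}}+b_{\mathrm{e}}$ is not the displayed denominator. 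The displayed expression needs the ``even'' sums to include $i=0$ (otherwise the remark $\sum_{i\,\mathrm{even}}m_i+\sum_{i\,\mathrm{odd}}m_i=n$ following the corollary is false), but $b_{\mathrm{e}}$ contains neither the cross term $2\sum_{i\,\mathrm{even},\,i\ge2}\pi_0/\pi_i$ nor the $+1$ produced by squaring $\sum_{i\,\mathrm{even},\,i\ge 0}\pi_0/\pi_i$, nor the term $m_0\sum_{i\,\mathrm{odd}}\pi_0^2/(m_i\pi_i^2)$, while it does contain $\frac{\pi_0^2}{m_0\pi_0^2}\sum_{i\,\mathrm{even},\,i\ge2}m_i$, which the displayed denominator does not. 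These discrepancies do not cancel.

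A concrete check that the identification fails: take the Petersen graph, $n=10$, spectrum $3^1,1^5,(-2)^4$, $d=2$, $\overline{k_2}=6$, and $\G_2$ strongly regular (it is the triangular graph $T(5)$). One computes $\pi_0=10$, $\pi_1=6$, $\pi_2=15$, so the displayed right-hand side (with $0$ counted as even) equals $100/(125/9)=7.2$, and equality fails even though $\G_2$ is strongly regular; by contrast, each of the two instances of \eqref{ineq-basic-theo} gives exactly $6$, and so does their sum $n(n-1)/(b_{\mathrm{o}}+b_{\mathrm{e}})$. So your method, carried out honestly, proves the correct bound $\overline{k_d}\le n(n-1)/(b_{\mathrm{o}}+b_{\mathrm{e}})$ with the intended equality characterization, but it does not prove the inequality as displayed; the displayed bound is a genuinely different (weaker) quantity whose equality case, if you derive it instead from the spectral excess theorem plus Cauchy--Schwarz applied to $\sum\pi_0/\pi_i$ over each parity class, turns out to force $P_{id}=P_{0d}$ for all even $i$, i.e., antipodality, not merely strong regularity of $\G_d$. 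You should either restate the corollary with the corrected right-hand side or flag the discrepancy; as written, asserting that the sums ``combine into'' the displayed expression is the gap.
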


\begin{example}
The Wells graph, on $n=32$ vertices, has intersection array $\{5,4,1,1;1,1,4,5\}$ and spectrum
 $5^1,\sqrt{5}^{8},1^{10},-\sqrt{5}^{8},-3^{5}$. This graph is  $2$-antipodal, so that $k_d=1$. Then, Fig. \ref{Wells} shows the functions $\Phi_0(t)$ with $H_0=\{2,4\}$, and  $\Phi_1(t)$ with $H_1=\{1,3\}$. Their (common)  maximum value is attained for $t_0=1$ and $t_1=-1$, respectively, and  it is $\Phi_0(1)=\Phi_1(-1)=31=s_{d-1}$. Then, $P_{24}=P_{44}$ and $P_{14}=P_{34}$.
\end{example}

\begin{figure}[ht]
  \center
  \includegraphics[scale=.4]{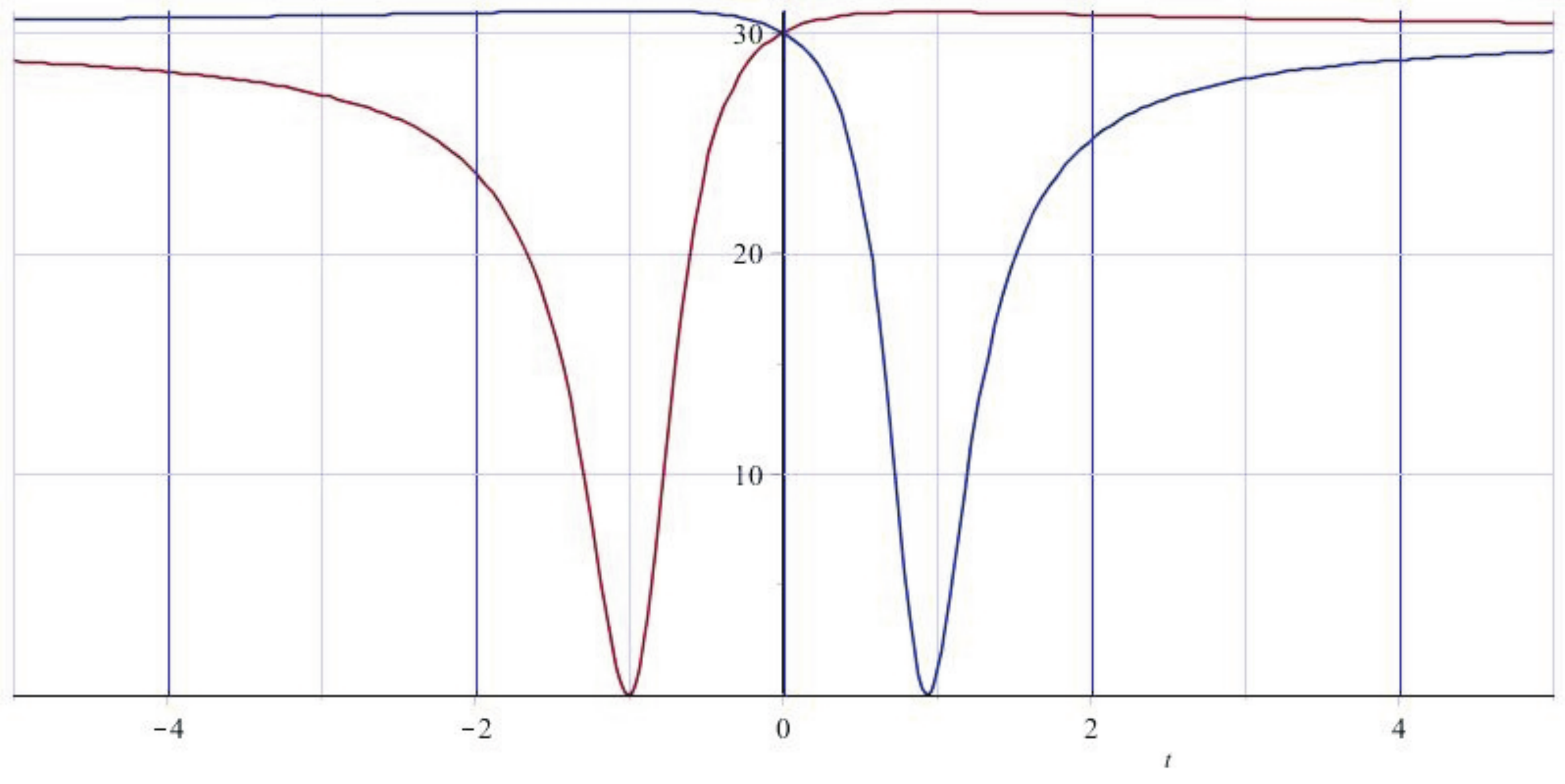}\\
  \caption{The functions $\Phi_0(t)$ (in red) with $H_0=\{2,4\}$, and $\Phi_1(t)$ (in blue) with $H_1=\{1,3\}$ of the Wells graph.}
  \label{Wells}
\end{figure}

In fact, the above expression can be simplified because 
$\sum_{\mbox{\scriptsize $i$ even}}m_i+\sum_{\mbox{\scriptsize $i$ odd}}m_i=n$,
 $\sum_{\mbox{\scriptsize $i$ even}}\frac{\pi_0}{\pi_i}=\sum_{\mbox{\scriptsize $i$ odd}}\frac{\pi_0}{\pi_i}$ (see \cite{f00}),
 and, from \eqref{spexcess}, $\sum_{\mbox{\scriptsize $i$ even}}\frac{\pi_0^2}{m_i\pi_i^2}+\sum_{\mbox{\scriptsize $i$ odd}}\frac{\pi_0^2}{m_i\pi_i^2}=n/p_d(\lambda_0)$.
 Anyway, we have written \eqref{eq-spet-half-antipod} as it is to emphasize the `symmetries' between even and odd terms.

The following result was used in \cite{bf14,f01} for the case of distance-regular graphs (where $p_d(\lambda_i)=P_{id}$).

\begin{corollari}
Let $\G$ be a regular graph with eigenvalues $\lambda_0>\lambda_1>\cdots >\lambda_d$. Let $H\subset \{1,\ldots,d\}$.
Then, $p_d(\lambda_i)=p_d(\lambda_j)$ for every $i,j\in H$ if and only if $\sum_{i\neq j}(m_i\pi_i-m_j\pi_j)^2=0$.
\end{corollari}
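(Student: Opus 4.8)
The plan is to express the numbers $p_d(\lambda_i)$ explicitly and then simply read off the equivalence. First I would rewrite \eqref{mult.vs.pd}, for $i=1,\ldots,d$, as
\[
p_d(\lambda_i)=(-1)^i\,\frac{p_d(\lambda_0)\,\pi_0}{m_i\pi_i},
\]
using that $1/(-1)^i=(-1)^i$. Since $p_d(\lambda_0)=\|p_d\|_\G^2=p_d(k)>0$ and $\pi_0>0$, the constant $c:=p_d(\lambda_0)\pi_0$ is a fixed positive number, which is exactly what will let me cancel it at the end. So on the mesh the only varying factor is $(-1)^i/(m_i\pi_i)$, and the sign alternation of $p_d(\lambda_1),\dots,p_d(\lambda_d)$ is visible directly from it.

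Next I would invoke the parity restriction on the admissible sets $H$ already noted earlier in the paper: since $p_d(\lambda_i)$ has sign $(-1)^i$ and never vanishes, $p_d$ can only be constant on a set $H$ all of whose elements have the same parity, so we may take $(-1)^i=\epsilon\in\{+1,-1\}$ for all $i\in H$. Then for any $i,j\in H$,
\[
p_d(\lambda_i)-p_d(\lambda_j)=\epsilon c\Bigl(\tfrac{1}{m_i\pi_i}-\tfrac{1}{m_j\pi_j}\Bigr)=\epsilon c\,\frac{m_j\pi_j-m_i\pi_i}{(m_i\pi_i)(m_j\pi_j)},
\]
and since $\epsilon c\neq 0$ and $m_i\pi_i,m_j\pi_j>0$, this is $0$ iff $m_i\pi_i=m_j\pi_j$. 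Hence $p_d$ is constant on $H$ iff $m_i\pi_i=m_j\pi_j$ for all $i,j\in H$, which in turn holds iff the sum of squares $\sum_{i\ne j}(m_i\pi_i-m_j\pi_j)^2$ (over $i,j\in H$) vanishes, because a finite sum of squares of reals is zero exactly when every summand is.

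The computation is entirely routine, so there is no genuine technical obstacle; the one point that must be stated carefully — and the only thing that could make the equivalence look false — is the parity convention, i.e.\ that ``for every $i,j\in H$'' is understood within the class of feasible sets $H$ (indices of a single parity). For an $H$ of mixed parity $p_d$ can never be constant on it, whereas the sum of squares could still vanish, so I would make that standing hypothesis explicit at the moment I use it. Everything else is the one-line cancellation in the displayed identity above.
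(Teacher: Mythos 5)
Your proof is correct, but it takes a genuinely different route from the paper's. You read the conclusion directly off the multiplicity formula \eqref{mult.vs.pd}, which gives $p_d(\lambda_i)=(-1)^i p_d(\lambda_0)\pi_0/(m_i\pi_i)$ with $p_d(\lambda_0)\pi_0>0$, so that on a single-parity set $H$ the constancy of $p_d$ is equivalent to $m_i\pi_i$ being constant on $H$, i.e.\ to the vanishing of $\sum_{i\neq j}(m_i\pi_i-m_j\pi_j)^2$ over $i,j\in H$. The paper instead equates the right-hand side of \eqref{ineq-basic-theo} with the spectral excess \eqref{spexcess}; after clearing denominators this reduces to the Cauchy--Schwarz (Lagrange identity) equality $\sum_{i\in H}m_i\sum_{i\in H}\frac{\pi_0^2}{m_i\pi_i^2}=\bigl(\sum_{i\in H}\frac{\pi_0}{\pi_i}\bigr)^2$, whose defect is a positive-coefficient combination of the terms $(m_i\pi_i-m_j\pi_j)^2$, $i,j\in H$. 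Your argument is more elementary and self-contained, needing only \eqref{mult.vs.pd}; the paper's version makes visible that the sum-of-squares condition is exactly the condition under which the bound of Theorem \ref{teo(basic)} collapses to the classical spectral excess $p_d(\lambda_0)$. Your explicit flagging of the parity restriction is also worth keeping: as literally stated, for a mixed-parity $H$ the sum of squares can vanish while $p_d$ still alternates in sign on $H$, so the corollary must be read within the class of feasible (single-parity) sets $H$, as announced earlier in the paper.
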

\begin{proof}
Notice that the right hand side of \eqref{ineq-basic-theo} is just the spectral excess $p_d(\lambda_0)$, which is given by \eqref{spexcess}. Then, the result follows from from equating both expressions and simplifying.
\end{proof}

\subsection*{The case \boldmath $\lambda_0\in H$}

To deal with this case, we could proceed as above by defining conveniently a degree $d-1$ polynomial $r$. Then the proof is similar to the one for Theorem \ref{teo(basic)}.
If $\lambda_0\in H$ then  $p(\lambda_i)=p(\lambda_0)$ for any $i\in H$.
Moreover, the odd indexes, cannot belong to $H$. In particular $1\notin H$. For instance, a possible choice for $r\in \Re_{d-1}[x]$ is:
\begin{itemize}
\item
$r(\lambda_0)=n-p_d(\lambda_0)$,\ $r(\lambda_i)=-p_d(\lambda_0)$ for $i\in H$, $i\neq 0$.
\item
$r(\lambda_i)=-tp_d(\lambda_i)$ for $i\notin H$, $i\neq 1$,
\end{itemize}

Hovewer, we can follow a more direct approach by using \eqref{Phi(t)}. First, the following result was proved in \cite{bf14}:

\begin{propo}[\mbox{\cite[Prop. 8]{bf14}}]
\label{0ddd}
Let $\G$ be a distance regular graph with diameter $d$.
If $P_{0d} = P_{id}$ then $i$ is even.
Let $i>0$ be even. Then $P_{0d} = P_{id}$ if and only
$\Gamma$ is antipodal, or $i=d$ and $\Gamma$ is bipartite. \qqed
\end{propo}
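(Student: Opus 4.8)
The plan is to split the statement into the parity assertion and the two directions of the biconditional for even $i$, reducing everything to the two facts recalled in Section~1 --- the sign alternation of $p_d(\lambda_0),\dots,p_d(\lambda_d)$, and the characterization $p_d(A)=A_d$ of distance-regularity (Proposition~\ref{propo-p(A)=Ad}) --- together with one external input, the classification of imprimitive distance-regular graphs. The parity assertion is one line: $P_{0d}=p_d(\lambda_0)=k_d>0$, and since the $p_d(\lambda_j)$ alternate in sign, $p_d(\lambda_i)$ has the sign of $(-1)^i$, so $P_{id}=P_{0d}$ forces $i$ even.

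For the ``if'' direction I would treat the two alternatives separately. If $\G$ is antipodal with antipodal classes of size $r\ge 2$, then $\G_d$ is a disjoint union of copies of $K_r$, so $A_d=p_d(A)$ has exactly the eigenvalues $k_d=r-1$ and $-1$; since each $p_d(\lambda_j)$ must be one of these two numbers and the whole sequence alternates in sign, $p_d(\lambda_j)=k_d$ for every even $j$, so $P_{id}=k_d=P_{0d}$ for all even $i$. If instead $i=d$ and $\G$ is bipartite, then $d=i$ is even, $\lambda_d=-k$ is a simple eigenvalue whose eigenvector is the $\pm1$ indicator $\mathbf s$ of the bipartition, and since $d$ is even any two vertices at distance $d$ lie in the same part, so $A_d\mathbf s=k_d\mathbf s$ and $P_{dd}=p_d(\lambda_d)=k_d=P_{0d}$.

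For the converse, suppose $i>0$ is even and $p_d(\lambda_i)=k_d$. Then the $\lambda_0$- and $\lambda_i$-eigenspaces of $A$ both lie in the $k_d$-eigenspace of $A_d=p_d(A)$, and since $\G_d$ is $k_d$-regular, the multiplicity of its largest eigenvalue $k_d$ equals the number of connected components of $\G_d$; hence $\G_d$ has at least $1+m_i\ge 2$ components, so $\G$ is imprimitive, and by Smith's theorem (see \cite{bcn89}) it is bipartite or antipodal. If $\G$ is antipodal we are done. If $\G$ is bipartite but not antipodal, its only nontrivial system of imprimitivity is the bipartition; the partition of the vertex set into components of $\G_d$ is a system of imprimitivity (its indicator space is the $k_d$-eigenspace of $A_d$, hence invariant under the whole Bose--Mesner algebra) with at least two classes, so it must be exactly $\{V^+,V^-\}$. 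Then $d$ is even, $k_d$ has multiplicity $2$ as an eigenvalue of $A_d$, and its eigenspace $\langle\mathbf 1,\mathbf s\rangle$ is the direct sum of the $\lambda_0$- and $\lambda_d$-eigenspaces of $A$; therefore $p_d(\lambda_j)=k_d$ only for $j\in\{0,d\}$, forcing $i=d$, and we are in the bipartite case.

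The hard part will be the structural input used in the converse: passing from ``$\G_d$ disconnected'' to ``$\G$ bipartite or antipodal'', and then, in the bipartite-non-antipodal case, identifying the components of $\G_d$ with the two halves. Everything else is routine linear algebra and bookkeeping, but this step genuinely relies on (the refined form of) Smith's classification of imprimitive distance-regular graphs and on the fact that the component partition of $\G_d$ is a bona fide system of imprimitivity of the scheme. A more self-contained, purely spectral route --- combining $p_d(\lambda_i)=k_d$ with \eqref{mult.vs.pd} to get $m_i\pi_i=\pi_0$ for that $i$ and then manipulating relations among the $\pi_j$ and $m_j$ --- looks plausible but seems to come back to the same classification in the end.
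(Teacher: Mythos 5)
The paper does not actually prove this proposition: it is quoted verbatim from \cite{bf14} (Prop.~8) and closed with a tombstone, so there is no in-paper argument to compare yours against; I can only judge your proof on its own terms, and it is correct and, as far as I can reconstruct, follows the same route as the original. The parity claim and both halves of the ``if'' direction are exactly the routine facts you cite: the sign alternation of $p_d(\lambda_0),\dots,p_d(\lambda_d)$ with $p_d(\lambda_0)=k_d>0$; the spectrum $\{k_d,-1\}$ of a disjoint union of cliques combined with that alternation; and $A_d\mathbf{s}=k_d\mathbf{s}$ for the bipartition vector when $d$ is even. The ``only if'' direction is the substantive part, and your chain --- $\Gamma_d$ is $k_d$-regular, so the multiplicity of $k_d$ as an eigenvalue of $A_d=p_d(A)$ counts the components of $\Gamma_d$; hence $P_{id}=P_{0d}$ for some $i>0$ forces $\Gamma_d$ to be disconnected, hence $\Gamma$ imprimitive, hence bipartite or antipodal by Smith's theorem; and in the bipartite non-antipodal case the component partition of $\Gamma_d$ must be the bipartition, pinning the $k_d$-eigenspace of $A_d$ to $\langle\mathbf{1},\mathbf{s}\rangle$ and forcing $i=d$ --- is sound. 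Two points you should make explicit in a write-up: (i) Smith's theorem requires valency $k\ge 3$, so cycles must be disposed of separately (trivially: for an odd cycle $\Gamma_d$ is connected, and even cycles are antipodal); (ii) the identification of the components of $\Gamma_d$ with the two halves rests on the facts that the equivalence relation generated by $R_d$ is a closed subset of the scheme and that a bipartite non-antipodal distance-regular graph has only one nontrivial closed subset --- you flag this correctly, and it is indeed the only non-elementary input besides Smith's classification.
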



%

\begin{theo}
Let $\G$ be a regular graph with $n$ vertices, spectrum $\spec \G$ as above, and mean excess $\overline{k_d}$. Then, for every $i=1,\ldots,d$,
\begin{equation}
\label{ineq-antipod-theo}
\overline{k_d}\le \frac{n\left(m_i+\sum_{j\neq 0,i}\frac{\pi_0^2}{m_j\pi_j^2}\right)}
 {\left(\frac{\pi_0}{\pi_i}+\sum_{j\neq 0,i}\frac{\pi_0^2}{m_j\pi_j^2}\right)^2+m_i+\sum_{j\neq 0,i}\frac{\pi_0^2}{m_j\pi_j^2}}.
\end{equation}
Moreover:
\begin{itemize}
\item[$(a)$]
Equality holds for some $i\neq d$ if and only it holds for any $i=1,\ldots,d$ and  $\G$ is an antipodal distance-regular graph.
\item[$(b)$]
Equality holds only for $i=d$ if and only if $\G$ is a bipartite, but not antipodal, distance-regular graph.
\end{itemize}
\end{theo}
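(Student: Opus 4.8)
The plan is to follow the pattern of Theorem~\ref{teo(basic)} with the set $H$ now taken to be $\{0,i\}$, so that the equality case will encode a relation between $p_d(\lambda_0)$ and $p_d(\lambda_i)$. Two essentially equivalent routes lead to \eqref{ineq-antipod-theo}. The first is the ``direct'' one mentioned before the statement: feed Theorem~\ref{SPEtheorem} a one-parameter family $r=r^{(t)}\in\Re_{d-1}[x]$ tailored to the case $\lambda_0\in H$; after expressing $r^{(t)}$ through the Lagrange interpolators and using \eqref{mult.vs.pd}, the number $r^{(t)}(\lambda_0)$ is affine in $t$ and $n\|r^{(t)}\|_\G^2$ is a positive-definite quadratic in $t$, so Theorem~\ref{SPEtheorem} becomes $\Phi_i(t)=r^{(t)}(\lambda_0)^2/\|r^{(t)}\|_\G^2\le\overline{s_{d-1}}=n-\overline{k_d}$ with $\Phi_i$ of the rational form \eqref{Phi(t)}; maximizing $\Phi_i$ over $t$ exactly as in the proof of Theorem~\ref{teo(basic)} and simplifying with $\pi_j=\prod_{k\ne j}|\lambda_j-\lambda_k|$ and \eqref{spexcess} produces the right-hand side of \eqref{ineq-antipod-theo}. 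The second, shorter route combines the spectral excess theorem, $\overline{k_d}\le p_d(\lambda_0)=n\bigl(\sum_{j=0}^d\pi_0^2/(m_j\pi_j^2)\bigr)^{-1}$, with the purely elementary inequality $p_d(\lambda_0)\le$ (RHS of \eqref{ineq-antipod-theo}), which on clearing denominators reduces to $B(\pi_0/\pi_i-m_i)^2\ge0$, where $B=\sum_{j\ne0,i}\pi_0^2/(m_j\pi_j^2)>0$ for $d\ge2$; so equality in the second step holds iff $\pi_0/\pi_i=m_i$, and chaining the two inequalities gives \eqref{ineq-antipod-theo}.

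Equality in \eqref{ineq-antipod-theo} thus forces both steps to be tight: $\overline{k_d}=p_d(\lambda_0)$, which by the spectral excess theorem means $\G$ is distance-regular, and $\pi_0/\pi_i=m_i$, which by \eqref{mult.vs.pd} is the same as $|p_d(\lambda_i)|=p_d(\lambda_0)=k_d$, i.e. $P_{id}=(-1)^iP_{0d}$. The structural input now takes over. If $i$ is even, $P_{id}=P_{0d}$, and Proposition~\ref{0ddd} gives that $\G$ is antipodal, or else $i=d$ and $\G$ is bipartite. If $i$ is odd, $P_{id}=-P_{0d}=-k_d$ places $-k_d$ in the spectrum of the $k_d$-regular graph $\G_d$, so $\G_d$ has a bipartite component; for a distance-regular graph this means either $\G$ is antipodal (necessarily a double cover, $k_d=1$) or $\G_d$ is connected and bipartite, which forces $i=d$ and $\G$ bipartite. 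In all cases equality yields $\G$ distance-regular and either antipodal or ($i=d$) bipartite, and conversely each of these makes both steps tight, so equality holds.

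The two alternatives in the statement are then sorted out using that, for an antipodal distance-regular graph with antipodal classes of size $r$, $\G_d$ is a disjoint union of copies of $K_r$, so $p_d$ takes only the values $k_d$ and $-1$ on the spectrum, the value $k_d$ being attained at $\lambda_0$ together with the eigenvalues inherited from the antipodal quotient, which lie among $\lambda_0,\lambda_2,\lambda_4,\dots$. Hence $P_{id}=(-1)^iP_{0d}$ (equivalently $\pi_0/\pi_i=m_i$) holds at all even indices $i$ when $r\ge3$ and at all indices when $r=2$; in particular, for $d\ge3$ it holds at some $i\ne d$ (e.g. $i=2$), which is the ``$\Leftarrow$'' of $(a)$, while its ``$\Rightarrow$'' is the case $i\ne d$ of the previous paragraph. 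For $(b)$: if equality holds only at $i=d$, the previous paragraph gives $\G$ distance-regular and bipartite or antipodal, and antipodality is excluded since it would force equality at some $i\ne d$; conversely a bipartite non-antipodal distance-regular graph has $p_d(\lambda_d)=(-1)^dp_d(\lambda_0)$, i.e. $\pi_0/\pi_d=m_d$, while $|p_d(\lambda_j)|<k_d$ for $0<j<d$ (else $\G$ would be antipodal), so equality occurs precisely at $i=d$.

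The main obstacle I expect is making the first route fully precise: choosing the family $r^{(t)}$ so that $\Phi_i$ really collapses to the form \eqref{Phi(t)} and optimizes to exactly the stated right-hand side of \eqref{ineq-antipod-theo}, rather than to the strictly smaller $p_d(\lambda_0)$ (which would merely reprove the spectral excess theorem). The second route sidesteps this, reducing the whole inequality to $B(\pi_0/\pi_i-m_i)^2\ge0$; there the only delicate points are the clean evaluation of the equality case and, in $(a)$ and $(b)$, invoking the correct structural facts about distance-regular graphs — that the even-indexed eigenvalues are exactly those of the antipodal quotient, and that $\G_d$ is disconnected if and only if $\G$ is antipodal.
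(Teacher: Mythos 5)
Your second route is correct and is, in substance, a cleaner repackaging of what the paper actually does, though the mechanics differ. The paper obtains \eqref{ineq-antipod-theo} by evaluating the function $\Phi$ of \eqref{Phi(t)} with $H=\{i\}$ at the particular (non-optimal) point $t=\pm p_d(\lambda_0)$, so that $n-\Phi(\pm p_d(\lambda_0))$ is exactly the right-hand side of \eqref{ineq-antipod-theo}; equality then forces $r=q_{d-1}$ in Theorem~\ref{SPEtheorem}, hence $\G$ distance-regular with $P_{id}=\pm P_{0d}$, and Proposition~\ref{0ddd} finishes. Your chain $\overline{k_d}\le p_d(\lambda_0)\le\mbox{RHS}$, with the second inequality reduced to $\frac{B}{m_i}\left(\frac{\pi_0}{\pi_i}-m_i\right)^2\ge 0$, is the same two-step estimate made explicit (the paper's step $\Phi(\pm p_d(\lambda_0))\le\Phi(t_0)=n-p_d(\lambda_0)$ is your second inequality in disguise), and I checked the algebra: the identity is correct and makes the equality condition $\pi_0/\pi_i=m_i$, i.e.\ $|P_{id}|=P_{0d}$, completely transparent. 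This buys you a genuinely more elementary derivation of the inequality (no optimization, no Lagrange interpolation for this theorem) and, as a bonus, it covers odd $i$ uniformly, whereas the paper's proof only says ``for some even $i$'' even though the statement is for every $i=1,\ldots,d$.

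The one step you should not leave as stated is the structural dichotomy for odd $i$. From $P_{id}=-k_d$ you infer that $\G_d$ has a bipartite component and then assert ``either $\G$ is antipodal or $\G_d$ is connected and bipartite.'' The implicit claim that a disconnected $\G_d$ forces antipodality is false for distance-regular graphs in general: the incidence graph of a generalized quadrangle is bipartite, not antipodal, has even diameter $d=4$, and its $\G_d$ splits into the two colour classes. (That example does not contradict your conclusion, because there $\G_d$ has no bipartite component, but it shows your intermediate dichotomy is not self-evident.) What you actually need is the companion of Proposition~\ref{0ddd} from \cite{bf14}: $P_{0d}=-P_{id}$ forces $i$ odd, and occurs exactly when $\G$ is an antipodal $2$-cover or $i=d$ and $\G$ is bipartite; you should cite that rather than argue it ad hoc. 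Finally, note that your (correct) observation that an antipodal $r$-cover with $r\ge 3$ attains equality only at even $i$ means you have not proved the literal ``it holds for any $i=1,\ldots,d$'' clause of part $(a)$ — but that clause is not derivable by the paper's argument either, so this is a defect of the statement rather than of your proof.
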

\begin{proof}
The inequality \eqref{ineq-antipod-theo} follows from \eqref{Phi(t)} by taking $H=\{i\}$ for some even $i\neq 0$, and choosing  $t=p_d(\lambda_0)$. Then, in case of equality,
Theorem \ref{teo(basic)} tells us that $\G$ is distance-regular. Then, $\G_d$ is a regular graph with equal eigenvalues $P_{0d}$ and $P_{id}$. So, the result follows
from Proposition \ref{0ddd}.
\end{proof}

\begin{example}
For the Wells graph the right hand expression of \eqref{ineq-antipod-theo} gives $1(=k_4)$ for any $i=1,\ldots,4$, in concordance with its antipodal character.
In contrast, the folded $10$-cube $FQ_{10}$, on $n=512$
vertices, has intersection array $\{10,9,8,7,6;1,2,3,4,10\}$ and spectrum
 $10^1,6^{45},2^{210},-2^{210},-6^{45},-10^1$. Then, the right hand expression of \eqref{ineq-antipod-theo} gives $234.16$, $293.36$, $293.36$, $234.16$ for $i=1,2,3,4$, respectively, and $126(=k_5)$ for $i=5$, showing that $FQ_{10}$ is a bipartite distance-regular graph, but not antipodal.
\end{example}

Another characterization of antipodal distance-regular graphs was given by the author in \cite{f97} by assuming that the distance $d$-graph of a regular graph is already antipodal.

\medskip

\section*{Acknowledgments} 
Research supported by the
{\em Ministerio de Ciencia e Innovaci\'on}, Spain, and the {\em European Regional Development Fund} under project MTM2011-28800-C02-01. 


\end{document}